\documentclass{amsart}
\usepackage{graphicx}
\usepackage{newlattice}

\DeclareMathOperator{\At}{At}
\newcommand{\tl}{\textup{l}}


\newcommand{\SK}[1]{\msf{K}_{#1}}

\newcommand{\slo}{<_\textup{lr}}

\theoremstyle{plain}
\newtheorem{theorem}{Theorem}
\newtheorem{lemma}[theorem]{Lemma}

\newtheorem{corollary}[theorem]{Corollary}

\theoremstyle{definition}

\begin{document}

\title{Atom-generated planar lattices}

\author{G. Gr\"{a}tzer} 
\email[G. Gr\"atzer]{gratzer@me.com}
\urladdr[G. Gr\"atzer]{http://server.maths.umanitoba.ca/homepages/gratzer/}

\date{Submitted Jan. 14, 2020; revised Sept. 3, 2020}
\subjclass[2010]{Primary: 06B05.}
\keywords{lattice, planar, atom-generated.}

\begin{abstract}
In this note, we discuss planar lattices 
generated by their atoms.
We prove that if $L$ is a planar lattice 
generated by $n$ atoms, 
then both the left and the right boundaries of $L$ 
have at most $n+1$ elements.

On the other hand, $L$ can be arbitrarily large. 
For every $k > 1$, 
we construct a planar lattice $L$
generated by $4$ atoms such that 
$L$ has more than $k$ elements.
\end{abstract}

\maketitle

\section{Introduction}\label{S:intro}

In this note, we deal with 
\emph{atom-generated planar lattices} 
(AGP lattices, for short),
that is, planar lattices (finite, by definition) 
generated by their sets of atoms; 
see Figure~\ref{F:4} for some small examples
and Figures \ref{F:4genstep1}--\ref{F:4geninduct} 
for some larger ones.

\begin{figure}[htb]
\centerline{\includegraphics{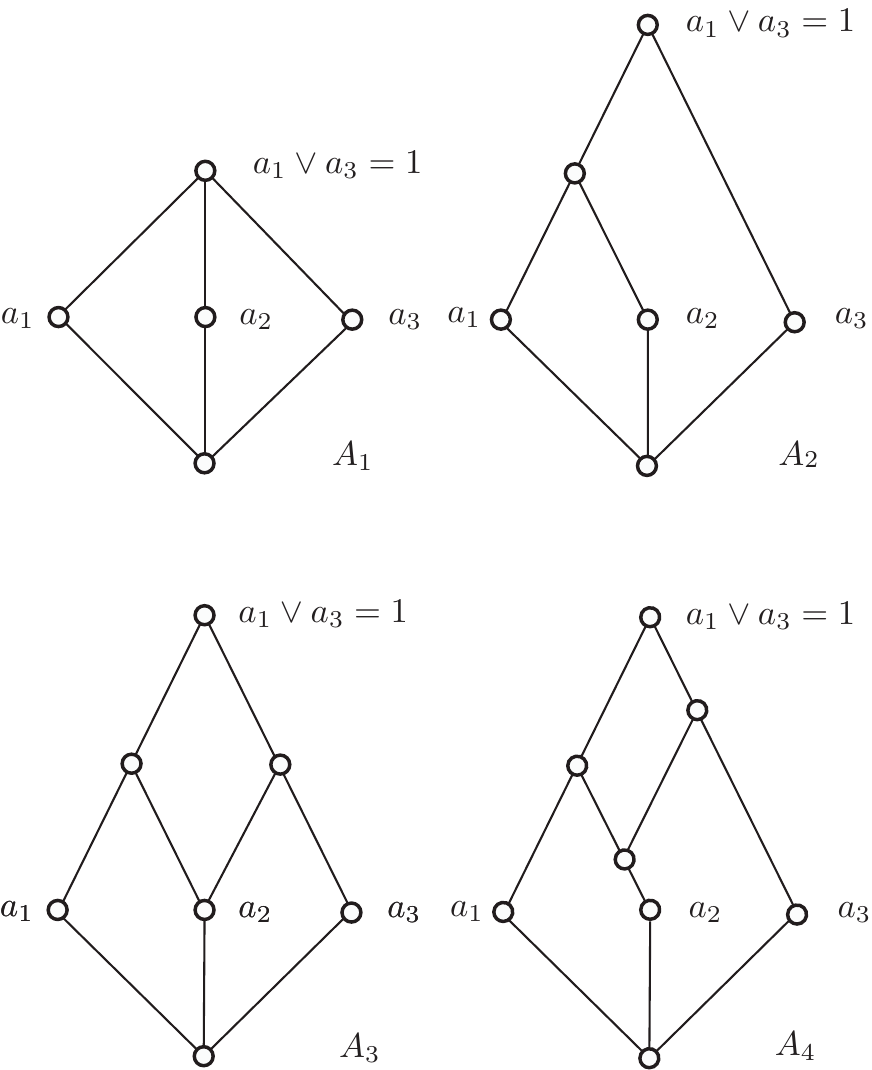}}
\caption{$3$-generated AGP lattices}\label{F:4}
\end{figure}

We have two results.

\begin{theorem}\label{T:boundary}
Let $L$ be a planar lattice generated its atoms. 
If $L$ has $n$ atoms, 
then both the left and the right boundaries of $L$ 
have at most $n+1$ elements.
\end{theorem}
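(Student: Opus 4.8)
The plan is to bound the length of the left boundary by counting atoms, and then to handle the right boundary by a mirror-image argument. Fix a planar diagram of $L$ and write the left boundary $\lbl$ as the maximal chain $0 = c_0 \prec c_1 \prec \dots \prec c_m = 1$, so that $\lbl$ has $m+1$ elements and it suffices to show $m \le n$. For $0 \le i \le m$ set $A_i = \{a \in \At L : a \le c_i\}$. Since $c_{i-1} \le c_i$, we have $A_0 \subseteq A_1 \subseteq \dots \subseteq A_m \subseteq \At L$, with $A_0 = \emptyset$ and $|A_m| \le n$. If each inclusion $A_{i-1} \subseteq A_i$ is strict, then $|A_i| \ge i$ for all $i$ by induction, whence $m \le |A_m| \le n$ and $\lbl$ has at most $n+1$ elements. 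Thus everything reduces to one claim: \emph{each covering step on the left boundary acquires a new atom}, i.e.\ for $1 \le i \le m$ there is an atom below $c_i$ that is not below $c_{i-1}$.

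I would deduce this from the sharper assertion that every boundary element is the join of the atoms below it, namely $c_i = \bigvee A_i$ for each $i$. Granting this, $c_{i-1} = \bigvee A_{i-1}$ and $c_i = \bigvee A_i$ are distinct, so $A_{i-1} \ne A_i$ and the inclusion is proper, completing the reduction above. The first ingredient toward $c_i = \bigvee A_i$ is global and uses no planarity: in any lattice generated by its atoms one has $\bigvee \At L = 1$. Indeed, the interval $[0, \bigvee \At L]$ is closed under both meet and join and contains every atom, so it is a sublattice containing the generating set; as the atoms generate $L$, this interval is all of $L$ and its top equals $1$. In particular the topmost boundary step always gains an atom.

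The hard part is to promote $c = \bigvee A_c$ from the top to every boundary element, and here planarity is essential: the interval $[0, c_i]$ need not be generated by its atoms (a join-irreducible that is not an atom may sit below $c_i$), so $c_i = \bigvee A_i$ cannot follow from atom-generation of $L$ alone. My plan is to prove the contrapositive geometrically. If a boundary element $x$ satisfied $\bigvee A_x < x$, then, since $L$ is atom-generated and $x$ is not a join of atoms, $x$ would have to arise as a meet of strictly larger elements; I would then use the left-to-right order of the diagram together with the fact that the lower and upper covers of each element are linearly ordered in it to force the witnesses of such a meet to lie on opposite sides of $x$. This traps $x$ strictly between them and pushes it into the interior, contradicting $x \in \lbl$. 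Establishing this trapping rigorously, that is, ruling out non-atomistic elements on the boundary, is the main obstacle: the delicate phenomenon the argument must capture is that a meet such as $(a \vee b) \wedge (a \vee c)$ can be a non-atom join-irreducible, but planarity always confines it, and its defining covers, to the interior.

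Finally, the right boundary $\rbl$ is handled by reflecting the planar diagram left-to-right, which interchanges $\lbl$ and $\rbl$ and preserves atom-generation; the same argument then yields the bound $n+1$ for $\rbl$ as well.
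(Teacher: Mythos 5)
Your reduction is sound as far as it goes: if every covering step $c_{i-1}\prec c_i$ on the left boundary acquires a new atom, then $m\le n$ and the bound follows; and this would indeed follow from the claim that every left-boundary element is the join of the atoms below it. But that claim carries essentially the whole content of the theorem, and you do not prove it --- you say yourself that establishing the ``trapping'' rigorously is the main obstacle. Moreover, the trapping principle you propose is false as stated: an element of the left boundary can perfectly well be the meet of two incomparable larger elements lying on opposite sides of it (the bottom element $0=a_1\wedge a_n$ is the extreme example, and the bottom of any covering square whose left edge lies on the boundary is another), so ``$x$ arises as an essential meet, hence $x$ is pushed into the interior'' cannot be the mechanism. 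Any correct argument has to use the hypothesis that $x$ is not a join of atoms in some essential, quantitative way, and that is exactly the step your proposal leaves open.

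The paper closes this gap by a different, constructive route. Enumerating the atoms $a_1,\dots,a_n$ from left to right, it invokes Cz\'edli's Left-right Inequality (if $a\slo b$ and $b\slo c$, then $b<a\vee c$) to show that for every $u\in L$ the index set $\At(u)=\{i: a_i\le u\}$ is an \emph{interval} of $\{1,\dots,n\}$. From this it obtains the sublattice decomposition $L=J_k\cup M_k$, where $J_k$ is the principal ideal generated by $a_1\vee\dots\vee a_{k-1}$ and $M_k$ is the union of the filters of $a_k,\dots,a_n$ together with $0$, and deduces that the explicit chain $0<a_1<a_1\vee a_2<\dots<a_1\vee\dots\vee a_n=1$ is maximal; since the elements on or to the right of this chain form a sublattice containing all the atoms (Kelly--Rival), the chain \emph{is} the left boundary, and it has at most $n+1$ elements. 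If you want to salvage your top-down approach, the Left-right Inequality and the resulting interval property of $\At(u)$ are precisely the planarity tools you are missing.
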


\begin{theorem}\label{T:size}
For every pair of integers $k,n$ with $k > 1$ and $n > 3$, 
there is a planar lattice $L$ with the following two properties\tup{:}
\begin{enumeratei}
\item $L$ is generated by its $n$\! atoms; 
\item $L$ has more than $k$\! elements.
\end{enumeratei}
\end{theorem}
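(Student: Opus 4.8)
The plan is to treat the case $n=4$ as the heart of the matter and to reduce the general case $n>4$ to it at the end. For $n=4$ I would exhibit an explicit increasing sequence of planar lattices $K_1,K_2,\dots$, each generated by the \emph{same} four atoms $a,b,c,d$, with $|K_{m+1}|>|K_m|$, and then choose $m$ large enough that $|K_m|>k$. The threshold $n>3$ is suggestive: the free modular lattice on three generators is finite while the one on four generators is infinite, so morally three atoms can only regenerate boundedly much, whereas four atoms can sustain an unbounded cascade of alternating joins and meets. Theorem~\ref{T:boundary} also tells us in advance what the sequence must look like: since every $K_m$ has four atoms, its left and right boundaries never exceed five elements, so the lattices cannot grow along their sides and must instead accumulate their elements in the interior.

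The construction itself I would make recursive. Take $K_1$ to be a fixed small AGP lattice on the four atoms (the base lattice drawn in the first of the larger figures). The passage from $K_m$ to $K_{m+1}$ is an insertion step: one locates a suitable covering configuration high in $K_m$ and glues a new block onto it along a boundary interval, adding at least one new element while keeping the diagram planar and keeping the atom set equal to $\{a,b,c,d\}$. The gluing is arranged so that the four atoms sit strictly below everything newly added, so no new atom is created and the bottom part is untouched. Checking planarity, the strict increase in size, and that exactly four atoms remain is then routine bookkeeping about planar diagrams.

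The main obstacle is showing that atom-generation is preserved, i.e.\ that each newly inserted element again lies in the sublattice generated by $a,b,c,d$. This is precisely the step that fails for naive gluings — for instance, stacking copies of $M_3$ on top of one another destroys it — so the new block must be designed so that every one of its elements is visibly a lattice term in the four atoms. I would secure this by induction on the height of a new element within the block: each new element is presented either as a join of lower covers or as a meet of two elements already shown to be generated, with the base of the induction supplied by elements that are themselves joins of atoms. Since $K_m$ is atom-generated by the inductive hypothesis, and since the generated sublattice is computed inside the larger $K_{m+1}$, where the needed new joins and meets are available, this propagates upward and yields that $K_{m+1}$ is atom-generated. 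This closes the induction and, choosing $m$ with $|K_m|>k$, settles the case $n=4$.

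For general $n>4$ I would run the same interior cascade but start from a base diagram carrying $n$ atoms: the four active atoms $a,b,c,d$ that drive the recursion, together with $n-4$ spectator atoms placed along the bottom boundary to the left of $a$ and the right of $d$, attached (together with the few joins their presence forces) so that the whole remains a planar lattice. The recursive insertions touch only interior elements built over the four active atoms, so the atom set stays of size $n$ throughout. Such a lattice is generated by its atoms, since the interior elements are terms in $a,b,c,d$ as before while the spectator atoms are themselves among the generators; and its element count is at least that of the four-atom cascade, hence again exceeds $k$ once $m$ is large. I expect the atom-generation step of the $n=4$ construction to be the only genuinely delicate point; everything else reduces to verifying planarity and counting.
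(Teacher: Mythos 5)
Your overall strategy coincides with the paper's: build an increasing chain of planar lattices on four atoms, verify atom-generation at each step, and reduce $n>4$ to $n=4$ by adjoining extra atoms (a reduction the paper actually leaves implicit, so your spectator-atom paragraph is a reasonable supplement). But as written the argument has a genuine gap: the inductive step is never constructed. You say that one ``locates a suitable covering configuration high in $K_m$ and glues a new block onto it along a boundary interval,'' yet you never say what the block is, and for an existence theorem of this kind the block \emph{is} the proof. Planarity, the lattice property, and above all atom-generation cannot be checked as ``routine bookkeeping'' against an unspecified configuration; you yourself observe that naive gluings (stacked copies of $M_3$) destroy atom-generation, which is precisely why a concrete block with concrete term representations must be exhibited. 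The paper does this: it passes from $\SK{m}$ to $\SK{m+1}$ by adding six named elements $\ga_m, \gb_m, \gg_m, \gd_m, \ge_m, \gf_m$ with explicit order relations and explicit presentations such as $\ga_1 = d_1 \jj c$ and $\gg_1 = \ga_1 \mm \gb_1$.

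There is also a subtler point your induction glosses over. You argue that $K_m$ is atom-generated by the inductive hypothesis and that the new elements are then reached by joins and meets ``available in $K_{m+1}$.'' But in the construction that actually works, $\SK{m}$ is \emph{not} a sublattice of $\SK{m+1}$: it is a meet-subsemilattice only, and certain joins change value (for instance $a_1 \jj c$ and $d_1 \jj c$ equal $i$ in $\SK 1$ but equal $\ga_1$ in $\SK 2$). Consequently the terms witnessing atom-generation in $K_m$ may evaluate to different elements of $K_{m+1}$, and one must re-verify that the old elements are still generated; the paper does so by checking that the changed joins were used only to generate $i$, which is independently the join of all four atoms. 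Your description of the step as a gluing that leaves the bottom part ``untouched'' implicitly assumes the old lattice survives as a sublattice, and that assumption fails here. To close the gap you must either exhibit an explicit block whose insertion genuinely preserves the sublattice, or carry out the paper's bookkeeping of which joins change and why the affected elements remain generated.
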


$2$- and $3$-generated AGP lattices 
are enumerated in Section~\ref{S:AGP}.
We prove Theorem~\ref{T:boundary} in 
Section~\ref{S:Boundary}
and Theorem~\ref{T:size} in Section~\ref{S:Large}.

For the basic concept and notation, 
see my books \cite{LTF} and \cite{CFL2}.

As usual, a planar lattice (finite, by definition)
is a lattice with a planar diagram 
understood but not specified.
For a more precise approach, 
see G. Cz\'edli and G.~Gr\"atzer~\cite{CGa}
and K.\,A. Baker and G. Gr\"atzer~\cite{BG18}.

\subsection*{Acknowledgement}
I would like to thank Kirby Baker for his help 
and his contributions to this topic.

\section{Small AGP lattices}\label{S:AGP}

We will utilize the following statement, see 
G. Cz\'edli \cite[3.13A]{gC14a}.

\begin{lemma}[Left-right Inequality]\label{L:lri}
If $L$ is a planar lattice and 
$a \slo b$ and $b \slo c$ in $L$, 
then $b < a \jj c$.
\end{lemma}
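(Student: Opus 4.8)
The plan is to read the inequality geometrically: writing $j=a\jj c$ and $m=a\wedge c$, I want to show that the middle element $b$ is trapped inside the interval $[m,j]$, so that in particular $b\le j$. The relations $a\slo b$ and $b\slo c$ are left-right comparabilities, so they force $a\parallel b$ and $b\parallel c$; a short check shows the hypotheses are incompatible with $a$ and $c$ being comparable, so I may assume $a\parallel c$ as well, and then $a\slo b\slo c$ yields $a\slo c$ since $\slo$ totally orders the antichain $\{a,b,c\}$. The strict part of the conclusion is then the easy half: from $b\parallel c$ we cannot have $b=j\ge c$, so once $b\le j$ is proved, $b<j$ follows. The whole content is therefore the single inequality $b\le j$.

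To prove $b\le j$ I would invoke the standard region description of an interval in a planar lattice (Kelly--Rival): the leftmost maximal chain $A$ and the rightmost maximal chain $C$ of $[m,j]$ bound a closed region of the diagram whose lattice points are exactly the elements of $[m,j]$, with lowest point $m$ and highest point $j$. Both $a$ and $c$ lie in $[m,j]$, hence weakly between $A$ and $C$. Now $a\slo b$ keeps $b$ to the right of $a$ while $b\slo c$ keeps it to the left of $c$; composing these with the positions of $a$ and $c$ should pin $b$ between $A$ and $C$ as well, so that $b$ lies in the region and hence $b\in[m,j]$, giving $b\le j=a\jj c$.

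The main obstacle is to make ``$b$ lies between $A$ and $C$'' rigorous and to extract $b\in[m,j]$ from it. This is exactly where planarity does the work: a Jordan-curve separation together with the monotonicity of height along maximal chains prevents an enclosed element from escaping above $j$ or below $m$ or from crossing the non-crossing boundary chains, and it is the transitivity of ``lies to the left of'' across the three elements that must be handled with care. If the material preceding this lemma already provides the basic calculus of $\slo$---that it totally orders each antichain and behaves predictably under $\jj$ and $\wedge$---then I would instead run a short combinatorial induction along a maximal chain from $b$ up to $b\jj c$, at each covering step using a left-right comparison to keep the running join beneath $j$; this trades the topology for bookkeeping and is likely the route the paper takes.
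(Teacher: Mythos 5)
The paper does not prove this lemma at all: it is quoted from G.~Cz\'edli [3.13A], and it is in substance Kelly and Rival's classical observation that pairwise incomparable $a \slo b \slo c$ forces $a \mm c < b < a \jj c$. So your proposal has to stand on its own, and as written it has a genuine gap at exactly the step that carries all the content. You correctly reduce everything to the single inequality $b \le j$ with $j = a \jj c$, and you propose to get it by trapping $b$ in the Kelly--Rival region of the interval $[m,j]$, $m = a \mm c$, bounded by its leftmost and rightmost chains $A$ and $C$. But the passage ``$a\slo b$ keeps $b$ to the right of $a$ \dots composing these with the positions of $a$ and $c$ should pin $b$ between $A$ and $C$'' is not an argument; it restates the lemma. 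Two concrete obstructions: (i) $a$ and $c$ lie in $[m,j]$ but need not lie on $A$ or $C$, so ``$b$ is to the right of $a$'' does not transfer to ``$b$ is to the right of $A$'' without a further planarity lemma; (ii) $A$ and $C$ are chains of the interval, not maximal chains of $L$, so ``to the right of $A$'' is not even defined until you extend them to maximal chains of $L$, and for an arbitrary extension an element horizontally between the two extended chains can still sit above $j$, so membership in the region does not follow from left--right position alone. The standard repair is to take maximal chains $E$ and $F$ of $L$ with $a, a\jj c \in E$ and $c, a \jj c \in F$, chosen to coincide above $a \jj c$ and below $a \mm c$, show from $a \slo b$ and $b \slo c$ that $b$ lies in the closed region bounded by $E \cup F$, and only then apply the region description of the interval. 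You acknowledge the hole yourself (``the main obstacle is to make \dots rigorous'') and then offer a second, entirely different sketch as a fallback; neither is carried out.

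A smaller but real point: the reduction to $a \parallel c$ is indeed necessary (if $a < c$ were compatible with the hypotheses, the conclusion $b < a\jj c = c$ would contradict $b \slo c$), but your ``short check'' that $a \slo b$ and $b \slo c$ exclude comparability of $a$ and $c$ is itself a nontrivial planarity fact --- an element incomparable to both members of a comparable pair lies on the same side of each --- and should be proved or cited, as should the transitivity of $\slo$ on antichains that you then invoke. In short, the architecture is the right one, essentially that of Kelly and Rival, but the proof is not yet there.
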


If the AGP lattice $L$ has $2$ atoms, 
then $L$ must be isomorphic to $\SC{2}^2 = \SB{2}$. 
The following lemma describes an AGP lattice with $3$ atoms.

\begin{lemma}\label{L:3}
Let $L$ be an AGP lattice with $3$ atoms, 
$a_1, a_2, a_{3}$, enumerated from left to right.
Then $a_1 \jj a_{3}$ is the unit element of $L$.
\end{lemma}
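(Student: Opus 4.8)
The plan is to reduce the whole statement to a single application of the Left-right Inequality. First I would record that, since $L$ is generated by its three atoms, the unit element $1_L$ equals $a_1 \jj a_2 \jj a_3$: every element of $L$ is a lattice term in $a_1, a_2, a_3$ and hence lies below their join, which is itself an element of $L$ and is therefore the largest one. Consequently it suffices to prove that $a_2 \leq a_1 \jj a_3$, for then $1_L = a_1 \jj a_2 \jj a_3 = a_1 \jj a_3$, as claimed.

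For that inequality the key observation is that the phrase ``enumerated from left to right'' means exactly $a_1 \slo a_2$ and $a_2 \slo a_3$. Indeed, distinct atoms are pairwise incomparable, so the three atoms are linearly ordered by $\slo$, and the left-to-right enumeration records precisely this order. I would then invoke Lemma~\ref{L:lri} with $a = a_1$, $b = a_2$, and $c = a_3$: from $a_1 \slo a_2$ and $a_2 \slo a_3$ the lemma yields $a_2 < a_1 \jj a_3$, which gives the required bound $a_2 \leq a_1 \jj a_3$.

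I do not anticipate a genuine obstacle here, since the content of the lemma is carried entirely by the Left-right Inequality and the conclusion is immediate once it is applied. The only two points deserving care are the two translations above: identifying $1_L$ with the join of the atoms, which relies on $L$ being atom-generated, and reading off $a_1 \slo a_2 \slo a_3$ from the enumeration, which relies on atoms being pairwise incomparable and hence $\slo$-comparable. With these in hand, no computation involving meets or the finer planar structure of $L$ is needed.
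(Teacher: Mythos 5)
Your proposal is correct and follows essentially the same route as the paper: identify $1$ with $a_1 \jj a_2 \jj a_3$ using that $L$ is atom-generated, then apply the Left-right Inequality to the $\slo$-chain $a_1 \slo a_2 \slo a_3$ to absorb $a_2$ into $a_1 \jj a_3$. Your version merely spells out the two translation steps that the paper leaves implicit.
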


\begin{proof}
Since $L$ is an AGP lattice with $3$ atoms, 
it follows that $1 = a_1 \jj a_2 \jj a_3$.
So we obtain that $1 = a_1 \jj a_{3}$ 
by the Left-right Inequality.
\end{proof}

If $L$ has $3$ atoms, then by Lemma~\ref{L:3},
there are four possibilities, up to isomorphism,
see Figure~\ref{F:4}. 
We see that Theorem~\ref{T:boundary} holds
in all four cases.

\section{Boundary chains}\label{S:Boundary}

To prepare for the proof of Theorem~\ref{T:boundary},
we need some preliminary results.
In this section,
let $L$ be an AGP lattice with its atoms 
$a_1, \dots, a_{n}$ enumerated from left to right.
For $u \in L$, define 
\begin{equation}\label{E:At}
   \At(u) = \setm{i}{a_i \leq u}.
\end{equation}
Let 
\begin{equation}\label{E:Cn}
\SC n = \set{1 \prec 2 \prec \dots \prec n}
\end{equation}
be the $n$ element chain.

\begin{lemma}\label{L:interval}
The set $\At(u)$ is an interval of $\SC n$ 
for any $u \in L$.
Moreover, 
\begin{align}
   \At(x) \ci \At(y) 
      &\text{\q for all $x \leq y \in L$,}\label{E:3}\\
   \At(u \mm v) = \At(u) \ii \At(v) 
      &\text{\q for all $u, v \in L$.}\label{E:4}
\end{align}
\end{lemma}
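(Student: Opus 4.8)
The plan is to dispose of the two displayed identities first, since both hold in every lattice and use nothing about planarity, and to reserve the real work for the interval claim. For \eqref{E:3}, monotonicity of $\At$ is immediate: if $x \leq y$ and $a_i \leq x$, then $a_i \leq y$, so $i \in \At(y)$. For \eqref{E:4}, the inclusion $\At(u \mm v) \ci \At(u) \ii \At(v)$ is the special case of \eqref{E:3} applied to $u \mm v \leq u$ and $u \mm v \leq v$, while the reverse inclusion uses only that $u \mm v$ is a greatest lower bound: if $a_i \leq u$ and $a_i \leq v$, then $a_i \leq u \mm v$. I would record these two lines and move on.

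The substantive assertion is that $\At(u)$ is an interval of $\SC n$, and here the Left-right Inequality (Lemma~\ref{L:lri}) is exactly the tool needed. Because the atoms are enumerated from left to right, they satisfy $a_i \slo a_j$ whenever $i < j$. The key step is then the following: given indices $i < j < k$ with $i, k \in \At(u)$, apply the Left-right Inequality to $a_i \slo a_j$ and $a_j \slo a_k$ to obtain $a_j < a_i \jj a_k$; since $a_i \leq u$ and $a_k \leq u$ give $a_i \jj a_k \leq u$, it follows that $a_j \leq u$, that is, $j \in \At(u)$. Writing $m = \min \At(u)$ and $M = \max \At(u)$ in the nonempty case, this one application with $i = m$ and $k = M$ shows that every index between $m$ and $M$ belongs to $\At(u)$, so $\At(u) = \set{m, m+1, \dots, M}$ is an interval; the empty set and singletons are intervals trivially.

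The main obstacle, to the extent there is one, is the geometric input rather than the lattice arithmetic: I must be certain that the left-to-right enumeration of the atoms genuinely reflects the left-right order, so that $a_i \slo a_j$ for $i < j$ is actually available to feed into Lemma~\ref{L:lri}. Once that is in hand, the join $a_i \jj a_k$ dominates every intervening atom and the interval property drops out with no further case analysis.
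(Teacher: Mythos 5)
Your proposal is correct and follows essentially the same route as the paper: the interval property comes from applying the Left-right Inequality to $a_i \slo a_j \slo a_k$ to get $a_j < a_i \jj a_k \leq u$, and the two displayed identities are dismissed as routine lattice facts (the paper calls them ``obvious''). Your version merely spells out the details that the paper leaves implicit.
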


\begin{proof}
Let $u \in L$ and let $1 \leq i < j \leq n$
with $i, j \in \At(u)$. 
By the Left-right Inequality, 
$k \in \At(u)$ for any $i < k < j$.
Therefore, $\At(u)$ is an interval. 
The other two statements are also obvious.
\end{proof}

\begin{lemma}\label{L:Mk}
For $k \in \set{1,\dots,n}$, let
\begin{equation}\label{E:Mk}
   M_k = \UUm{\fil a_i}{k \leq i \leq n} \uu \set{0}.
\end{equation}
Then $M_k$ is a sublattice of $L$.
\end{lemma}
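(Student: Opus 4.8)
The plan is to check directly from \eqref{E:Mk} that $M_k$ is closed under $\jj$ and $\mm$; closure is all that a sublattice requires, and the explicitly adjoined $0$ will handle meets that drop to the bottom. First I would reformulate membership using $\At$: for $u \neq 0$, we have $u \in M_k$ exactly when $a_i \leq u$, i.e. $i \in \At(u)$, for some $i$ with $k \leq i \leq n$; equivalently, since $\At(u)$ is an interval of $\SC n$ by Lemma~\ref{L:interval}, when $\max \At(u) \geq k$. I would also record the standard fact that in a finite lattice every nonzero element lies above an atom, so that $\At(u) = \emptyset$ precisely when $u = 0$.

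Closure under $\jj$ is the easy half and needs no geometry. Given $x, y \in M_k$, if either is $0$ the join equals the other and lies in $M_k$; otherwise choose $i \geq k$ with $a_i \leq x$, and then $a_i \leq x \jj y$ shows $i \in \At(x \jj y)$, so $x \jj y \in M_k$.

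The meet is where the real content, and the main obstacle, lies. Take $x, y \in M_k$; the cases $x = 0$ or $y = 0$ give $x \mm y = 0 \in M_k$, so assume both are nonzero with $\max \At(x) \geq k$ and $\max \At(y) \geq k$. By \eqref{E:4}, $\At(x \mm y) = \At(x) \ii \At(y)$ is the intersection of two intervals of $\SC n$, hence itself an interval. The key dichotomy is this: if this intersection is empty then $\At(x \mm y) = \emptyset$, forcing $x \mm y = 0 \in M_k$; and if it is nonempty, then its largest element equals $\min(\max \At(x), \max \At(y)) \geq k$, so some $a_i \leq x \mm y$ with $i \geq k$, whence $x \mm y \in M_k$. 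The delicate point I must get right is that the interval property (Lemma~\ref{L:interval}) is exactly what guarantees that a nonempty $\At(x) \ii \At(y)$ reaches up to an index $\geq k$, rather than consisting only of small indices; it is precisely this, together with the inclusion of $0$, that makes the meet case go through.
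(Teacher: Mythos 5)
Your proof is correct and follows essentially the same route as the paper's: joins are immediate, and the meet case rests on Lemma~\ref{L:interval} (the interval property of $\At$ together with \eqref{E:4}), with the adjoined $0$ absorbing the case $\At(x) \ii \At(y) = \es$. The only difference is presentational---you argue directly via the maximum of the intersection of two intervals, while the paper argues by contradiction and exhibits $k$ itself as an element of $\At(x \mm y)$.
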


\begin{proof}
By construction, $M_k$ is closed under joins. 
To show that $M_k$ is closed under meets,
let $x, y \in M_k$. 
Assume that $x \mm y \notin M_k$. 
Since $0 \in M_k$ but $x \mm y \nin M_k$, 
there exists an atom $a_i \in \id{(x \mm y)}$. 
Clearly, $i<k$ since otherwise 
$x \mm y \in \fil a_i \ci M_k$
would contradict that $x \mm y \nin M_k$. 
By definition, we can pick $j_x,j_y \in [k,\dots,n]$
such that $j_x \ci \At(x))$ and $j_y \in \At(y)$. 
Using equation \eqref{E:3} of Lemma~\ref{L:interval}, 
we have that $i \in \At(x \mm y) \ci \At(x)$ 
and $i \in \At(x \mm y) \ci \At(y)$. 
Since $i, j_x \in \At(x)$ and $i < k \le j_x$, 
Lemma~\ref{L:interval} gives that $k \in \At(x)$. 
Similarly, $k \in \At(y)$. 
Equation \eqref{E:4} of Lemma~\ref{L:interval} 
yields that $k \in \At(x) \ii \At(y) = \At(x \mm y)$, 
that is, $x \mm y \in \fil a_k \ci M_k$, 
contradicting the assumption that $x \mm y \in M_k$. 
\end{proof}

Note an easy consequence of Lemma~\ref{L:Mk}. 
\begin{corollary}\label{C:xx}
For $k < k' \in \set{1,\dots,n}$, let
\begin{equation}\label{E:Mkk}
   M_{k,k'} = \UUm{\fil a_i}{k \leq i \leq k'} \uu \set{0}.
\end{equation}
Then $M_{k,k'}$ is a sublattice of $L$. 
\end{corollary}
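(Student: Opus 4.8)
The plan is to realize $M_{k,k'}$ as the intersection of two sublattices, each of the form already handled by Lemma~\ref{L:Mk}. First I would introduce the left-right dual of $M_k$, namely
$$N_{k'} = \UUm{\fil a_i}{1 \leq i \leq k'} \uu \set{0}.$$
Since the mirror image of $L$ (the planar diagram reflected left to right) is again an AGP lattice, isomorphic to $L$ but with its atoms enumerated left to right as $a_n, \dots, a_1$, I would apply Lemma~\ref{L:Mk} to this mirror lattice and translate the indices back; the family $k \leq i \leq n$ of the mirror corresponds to $1 \leq j \leq n+1-k$ in $L$, which shows that $N_m$ is a sublattice of $L$ for every $m$, and in particular $N_{k'}$. (Alternatively, $N_{k'}$ can be shown to be a sublattice by repeating the argument of Lemma~\ref{L:Mk} verbatim, with the roles of small and large indices interchanged.)

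Next I would verify the identity $M_{k,k'} = M_k \ii N_{k'}$. The inclusion $M_{k,k'} \ci M_k \ii N_{k'}$ is immediate, since $k \leq i \leq k'$ forces both $i \geq k$ and $i \leq k'$. For the reverse inclusion, take $x \in M_k \ii N_{k'}$ with $x \neq 0$; then $\At(x)$ contains some index $i \geq k$ and some index $j \leq k'$. Because $\At(x)$ is an interval of $\SC n$ by Lemma~\ref{L:interval}, the whole interval between $i$ and $j$ lies in $\At(x)$, and a short case analysis (according to whether $i \leq j$ or $j \leq i$) produces an index $m$ with $k \leq m \leq k'$ and $m \in \At(x)$. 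Then $a_m \leq x$, so $x \in M_{k,k'}$. This is where the interval property of Lemma~\ref{L:interval} does the real work.

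Finally, since $M_k$ is a sublattice by Lemma~\ref{L:Mk} and $N_{k'}$ is a sublattice by the left-right dual argument, their intersection $M_{k,k'} = M_k \ii N_{k'}$ is a sublattice, which is the claim. I expect the only genuinely delicate point to be the justification that $N_{k'}$ is a sublattice: one must check that the left-right reflection really converts Lemma~\ref{L:Mk} into the desired statement, i.e., that principal filters and the left-to-right enumeration of atoms behave as expected under the mirror symmetry. The interval case analysis, by contrast, is routine once Lemma~\ref{L:interval} is invoked.
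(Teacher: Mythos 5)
Your proposal is correct and follows the paper's own route exactly: the paper proves the corollary in one line by intersecting the sublattice $M_k$ of Lemma~\ref{L:Mk} with its left-right reverse. You have simply supplied the details the paper leaves implicit---that the reflected lattice satisfies the hypotheses of Lemma~\ref{L:Mk}, and that the interval property of $\At(x)$ from Lemma~\ref{L:interval} forces $M_k \ii N_{k'} = M_{k,k'}$---both of which are accurate.
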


\begin{proof}
Intersect the sublattice of Lemma~\ref{L:Mk} 
and its left-right reverse.
\end{proof}

\begin{lemma}\label{L:prop2}
Define the principal ideals 
\begin{equation}\label{E:Jk}
J_k = \id(a_1 \jj \cdots \jj a_{k-1})
\end{equation}
for $k = 1, \dots, n$. 
Then $L = J_k \uu M_k$ for $k = 1, \dots, n$.
\end{lemma}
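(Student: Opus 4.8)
The plan is to prove the inclusion $L \ci J_k \uu M_k$ (the reverse being immediate) by a reduction that isolates a single hard case. Fix $k$ and take an arbitrary $x \in L$. If $x \in M_k$ we are done, so suppose $x \notin M_k$. Then $x \neq 0$, and $a_i \not\leq x$ for every $i$ with $k \leq i \leq n$; by Lemma~\ref{L:interval} the set $\At(x)$ is therefore a nonempty subinterval of $\{1, \dots, k-1\}$, say with largest element $q \leq k-1$. Writing $c_q = a_1 \jj \cdots \jj a_q$, it suffices to show $x \leq c_q$, since then $x \leq a_1 \jj \cdots \jj a_{k-1}$, i.e.\ $x \in J_k$. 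Thus everything reduces to the single assertion: every $x$ with $\At(x) \neq \emptyset$ satisfies $x \leq a_1 \jj \cdots \jj a_{\max\At(x)}$. Note that I do \emph{not} claim $x$ is the join of the atoms below it; that is false in general, which is exactly why the $M_k$-machinery is needed.

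First I would record one structural observation. Since $L$ is generated by its atoms, an element that is neither the join of two strictly smaller elements nor the meet of two strictly larger elements must itself be a generator, that is, an atom; equivalently, \emph{every non-atom of $L$ is join-reducible or meet-reducible}. Now suppose the reduced assertion fails, call $x$ \emph{bad} if $\At(x) \neq \emptyset$ and $x \not\leq a_1 \jj \cdots \jj a_{\max\At(x)}$, and choose a bad element $z$ that is maximal with respect to $\leq$; put $q = \max\At(z)$. No atom is bad, so $z$ is not an atom.

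Next I would dispose of the meet-reducible case cleanly. If $z = y_1 \mm y_2$ with $y_1, y_2 > z$, then $\At(z) = \At(y_1) \ii \At(y_2)$ by \eqref{E:4}, and since all three sets are intervals we get $\max\At(z) = \min\bigl(\max\At(y_1), \max\At(y_2)\bigr)$. Hence some $y_i$, say $y_1$, has $\max\At(y_1) = q$; by maximality $y_1$ is not bad, so $y_1 \leq a_1 \jj \cdots \jj a_q$, giving $z \leq y_1 \leq a_1 \jj \cdots \jj a_q$, a contradiction. Therefore $z$ is meet-irreducible, with a unique upper cover $z^*$. By maximality $z^*$ is not bad; if $\max\At(z^*) = q$ then $z \leq z^* \leq a_1 \jj \cdots \jj a_q$, again a contradiction, so $\max\At(z^*) > q$. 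Since $\At(z^*)$ is an interval containing $\At(z)$ and an index exceeding $q$, we have $a_{q+1} \leq z^*$ while $a_{q+1} \not\leq z$, and because $z \prec z^*$ this forces $z^* = z \jj a_{q+1}$.

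The remaining configuration is the main obstacle: a meet-irreducible $z$ whose unique cover is obtained by adjoining the single atom $a_{q+1}$, which lies strictly to the right of every atom below $z$, even though $z$ itself is not below $c_q$. This is precisely where planarity must be used, and the order-theoretic reductions above give no further leverage. Using the left-to-right order $a_p \slo \cdots \slo a_q \slo a_{q+1}$ (where $p = \min\At(z)$) together with the Left-Right Inequality of Lemma~\ref{L:lri}, I expect to show that $z$ must already lie under $c_q$, contradicting $z \prec z \jj a_{q+1}$. Converting the informal statement ``$a_{q+1}$ enters on the far right at one covering step'' into a rigorous contradiction with $z \not\leq c_q$ is the crux of the argument, and is where the geometry of the planar diagram, rather than pure lattice theory, does the work.
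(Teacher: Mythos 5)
There is a genuine gap, and you name it yourself: the case of a meet\nobreakdash-irreducible maximal bad element $z$ with $z \prec z^* = z \jj a_{q+1}$ is never resolved. Everything up to that point is sound --- the reduction to the assertion that $x \leq a_1 \jj \cdots \jj a_{\max\At(x)}$ whenever $\At(x) \neq \emptyset$, the elimination of the meet-reducible case via \eqref{E:4}, and the identification of $z^* = z \jj a_{q+1}$ --- but the argument then stops at exactly the configuration that carries all the content of the lemma, with only the expectation that the Left-right Inequality will somehow finish it. No such finish is supplied, and it is not clear one exists along these lines (a minimal or maximal bad element gives no purchase on this configuration), so what you have is not a proof.

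The idea you are missing is that one should not chase individual elements at all. Put $S = J_k \uu M_k$ and observe that $S = J_k \uu (M_k - \set{0})$. Now $J_k$ is a principal ideal, hence a sublattice and a down-set, while $M_k - \set{0}$ is an up-set and $M_k$ is a sublattice by Lemma~\ref{L:Mk}. Hence for $x \in J_k$ and $y \in M_k - \set{0}$ we get $x \mm y \in J_k$ (a down-set) and $x \jj y \in M_k - \set{0}$ (an up-set), so $S$ is closed under both operations and is a sublattice of $L$. Since $S$ contains $0$ and every atom ($a_i \in J_k$ for $i < k$, and $a_i \in \fil a_i \ci M_k$ for $i \geq k$), and $L$ is generated by its atoms, $S = L$. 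All the planarity the lemma needs is already packaged into Lemma~\ref{L:Mk}; trying to locate each element of $L$ in $J_k$ or $M_k$ directly, as you do, in effect requires re-proving the lemma in its hardest case.
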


\begin{proof}
Let $S = J_k \uu M_k$. 
Observe that $S = J_k \uu (M_k - \set{0})$.
Using that $J_k$ is a sublattice 
and a down-set, and also that $M_k - \set{0}$
is an up-set,
it is easy to see that $S$ is a sublattice of $L$.
\end{proof}

Now we prove Theorem~\ref{T:boundary}
in the following form.

\begin{theorem}\label{T:Boundary}
Let $L$ be an AGP lattice with all its atoms 
$a_1, \dots, a_{n}$ 
enumerated from left to right.
Define the elements $c_{\tl,1} = a_1$, 
$c_{\tl,2} = a_1 \jj a_{2}$, 
\dots, $c_{\tl,n} = a_1 \jj a_{n}$.
Then the chain $C$\emph{:}
\begin{equation}\label{E:chain}
    0 < c_{\tl,1} \leq c_{\tl,2}\leq \dots \leq c_{\tl,n} = 1
\end{equation}
is a maximal chain in $L$, in fact, 
the left boundary of $L$.
\end{theorem}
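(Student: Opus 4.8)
The plan is to reduce the whole statement to one standard fact about planar diagrams, namely that the left boundary is a maximal chain from $0$ to $1$ beginning with the leftmost atom, and then to pin down its elements one at a time using Lemma~\ref{L:prop2}; in this route Lemma~\ref{L:prop2} does all of the combinatorial work. First I would rewrite the elements of $C$. Since the atoms are listed left to right, $a_1 \slo a_i \slo a_k$ for $1 < i < k$, so the Left-right Inequality (Lemma~\ref{L:lri}) gives $a_i < a_1 \jj a_k$; hence $c_{\tl,k} = a_1 \jj a_k = a_1 \jj \cdots \jj a_k$, the join of the first $k$ atoms. In particular $c_{\tl,k} = c_{\tl,k-1} \jj a_k$, so the $c_{\tl,k}$ are increasing, and $c_{\tl,n} = a_1 \jj \cdots \jj a_n = 1$ because $L$ is atom-generated. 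Together with $0 < a_1 = c_{\tl,1}$ this shows that $C$ is a chain from $0$ to $1$. I also record that $\id c_{\tl,k} = \id(a_1 \jj \cdots \jj a_k)$ is exactly the ideal $J_{k+1}$ of Lemma~\ref{L:prop2}.

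Next I would invoke the cited fact that the left boundary of $L$ is a maximal chain $0 = e_0 \prec e_1 \prec \cdots \prec e_m = 1$ whose first edge runs to the leftmost atom, so that $e_1 = a_1$. Since the $e_j$ form a chain, every nonzero boundary element satisfies $e_j \geq a_1$, whence $1 \in \At(e_j)$. By Lemma~\ref{L:interval} the set $\At(e_j)$ is an interval of $\SC n$, so it starts at $1$: writing $k = \max \At(e_j)$ we get $\At(e_j) = \set{1, \dots, k}$.

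Now comes the identification of these boundary elements. Fix a nonzero boundary element $e$ with $\At(e) = \set{1,\dots,k}$. If $k = n$, then $e \geq a_1 \jj \cdots \jj a_n = 1$, so $e = 1 = c_{\tl,n}$. If $k < n$, I apply Lemma~\ref{L:prop2} in the form $L = J_{k+1} \uu M_{k+1}$. Were $e \in M_{k+1}$, then $e$, being nonzero, would lie above some $a_i$ with $i \geq k+1$, forcing $i \in \At(e) = \set{1,\dots,k}$, a contradiction; hence $e \in J_{k+1} = \id c_{\tl,k}$, that is, $e \leq c_{\tl,k}$. Since $\At(e) = \set{1,\dots,k}$ also gives $e \geq a_1 \jj \cdots \jj a_k = c_{\tl,k}$, I conclude $e = c_{\tl,k}$. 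Thus every element of the left boundary lies in $C$.

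Finally, the left boundary is a maximal chain contained in the chain $C$, so the two must coincide: any element of $C$ lying outside the boundary would be comparable to every boundary element, yielding a strictly larger chain and contradicting maximality. Therefore $C$ is the left boundary of $L$, and in particular a maximal chain, as claimed. The one input beyond the lemmas of this section is the standard description of the left boundary as a maximal chain beginning with the leftmost atom; marshalling that fact, rather than any computation, is the real obstacle, since everything combinatorial is delivered by the decomposition $L = J_{k+1} \uu M_{k+1}$.
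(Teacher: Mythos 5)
Your proof is correct, but its architecture is essentially the reverse of the paper's. The paper first proves that $C$ is a maximal chain directly: assuming $c_{\tl,k-1} < x < c_{\tl,k}$, it uses the decomposition $L = J_k \uu M_k$ of Lemma~\ref{L:prop2} to force $x \in M_k$, hence $x \geq a_j$ for some $j \geq k$, hence $x \geq c_{\tl,k}$, a contradiction; it then identifies the maximal chain $C$ as the left boundary by citing Kelly--Rival for the fact that the elements on or to the right of a maximal chain form a sublattice, which contains all the atoms and therefore is all of $L$. You instead start from the left boundary itself, taking as input that it is a maximal chain whose first nonzero element is the leftmost atom $a_1$, and then use Lemma~\ref{L:interval} together with $L = J_{k+1} \uu M_{k+1}$ to show each nonzero boundary element equals some $c_{\tl,k}$; maximality of $C$ then falls out at the end rather than being proved first. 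Both routes lean on the same two lemmas and on one standard planar-lattice fact apiece, just different ones: yours needs ``the left boundary is a maximal chain beginning with the leftmost atom,'' the paper's needs the Kelly--Rival sublattice property of maximal chains. Each is legitimate; the paper's version has the small advantage that it never needs to argue that $a_1$ is the boundary atom, while yours has the small advantage that the identification of every boundary element is completely explicit. One point worth making explicit in your write-up is the reduction $c_{\tl,k} = a_1 \jj a_k = a_1 \jj \cdots \jj a_k$ via the Left-right Inequality, which you do state and which is also implicit in the paper's passage from the definition of $c_{\tl,k}$ to the ideals $J_k$.
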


\begin{proof}
The inequalities in \eqref{E:chain}
follow from the Left-right Inequality.
Let $L$ be as in the theorem and let us assume that
$C$ is not maximal, that is,  
\begin{equation}\label{E:x}
a_1 \jj \cdots \jj a_{k-1} < x < a_1 \jj \cdots \jj a_k
\end{equation}
for some $x \in L$.
By Lemma~\ref{L:prop2}, 
we can decompose $L$ as $J_k \uu M_k$. 
By \eqref{E:x}, $x \in M_k$ holds, 
and so $x \geq a_j$ for some $j \in \set{k, \dots, n}$.
Therefore, $[1, k] \ci [1, j] \ci \At(x)$,
and so $a_1 \jj \cdots \jj a_k \leq x$, 
contradicting \eqref{E:x}, so $C$ is maximal.

Since all the elements of $L$ 
on or to the right of $C$ form a sublattice 
containing all its atoms $a_1, \dots, a_{n}$ 
generating $L$ (see for instance, 
David Kelly and Ivan Rival~\cite{KR75}),
it follows that $C$ is on the left boundary of $L$.
\end{proof}

Theorem~\ref{T:Boundary} implies Theorem~\ref{T:boundary} 
by left-right symmetry.

\section{Large AGP lattices with $4$ atoms}\label{S:Large}
 
\begin{figure}[!t]
\centerline{\includegraphics[scale = .7]{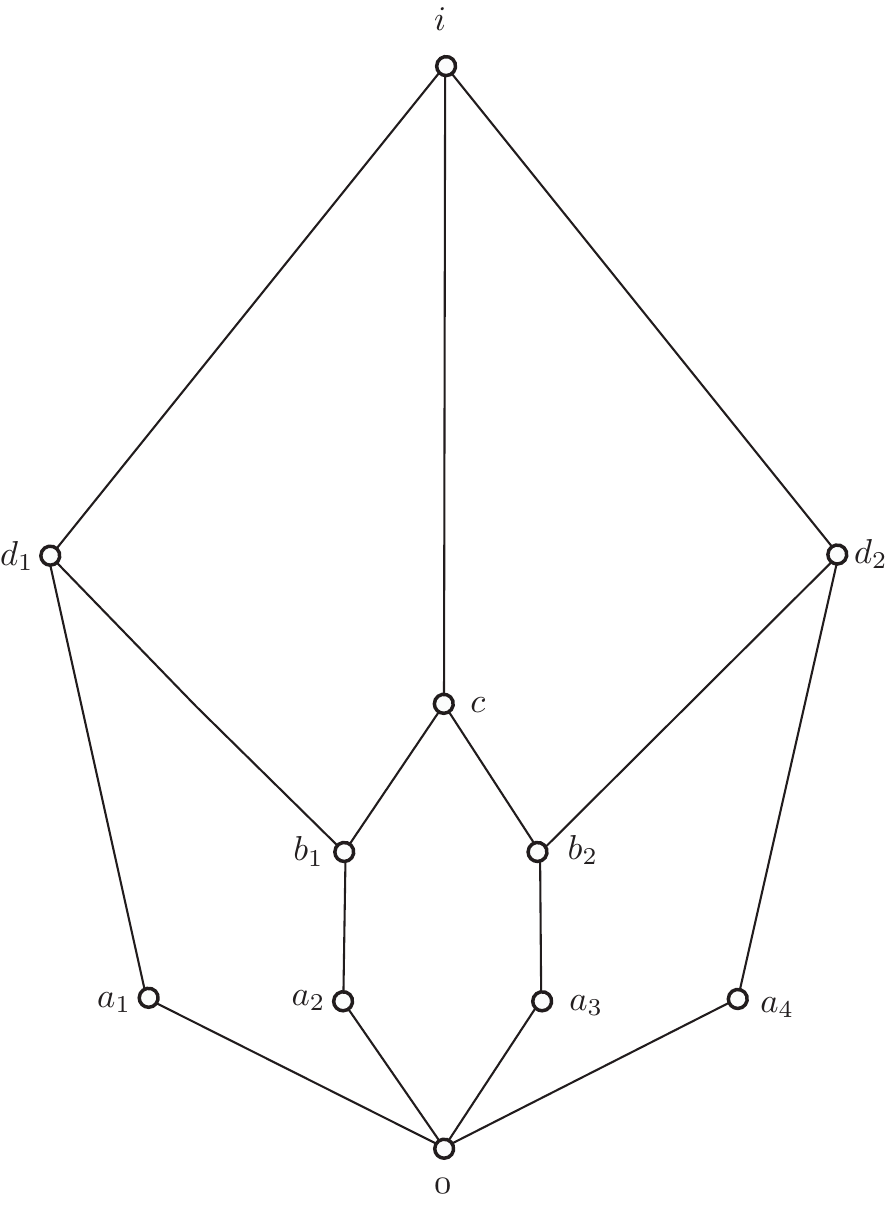}}
\caption{The lattice $\SK 1$}\label{F:4genstep1}
\end{figure}

\begin{figure}[p]
\centerline{\includegraphics[scale = .7]{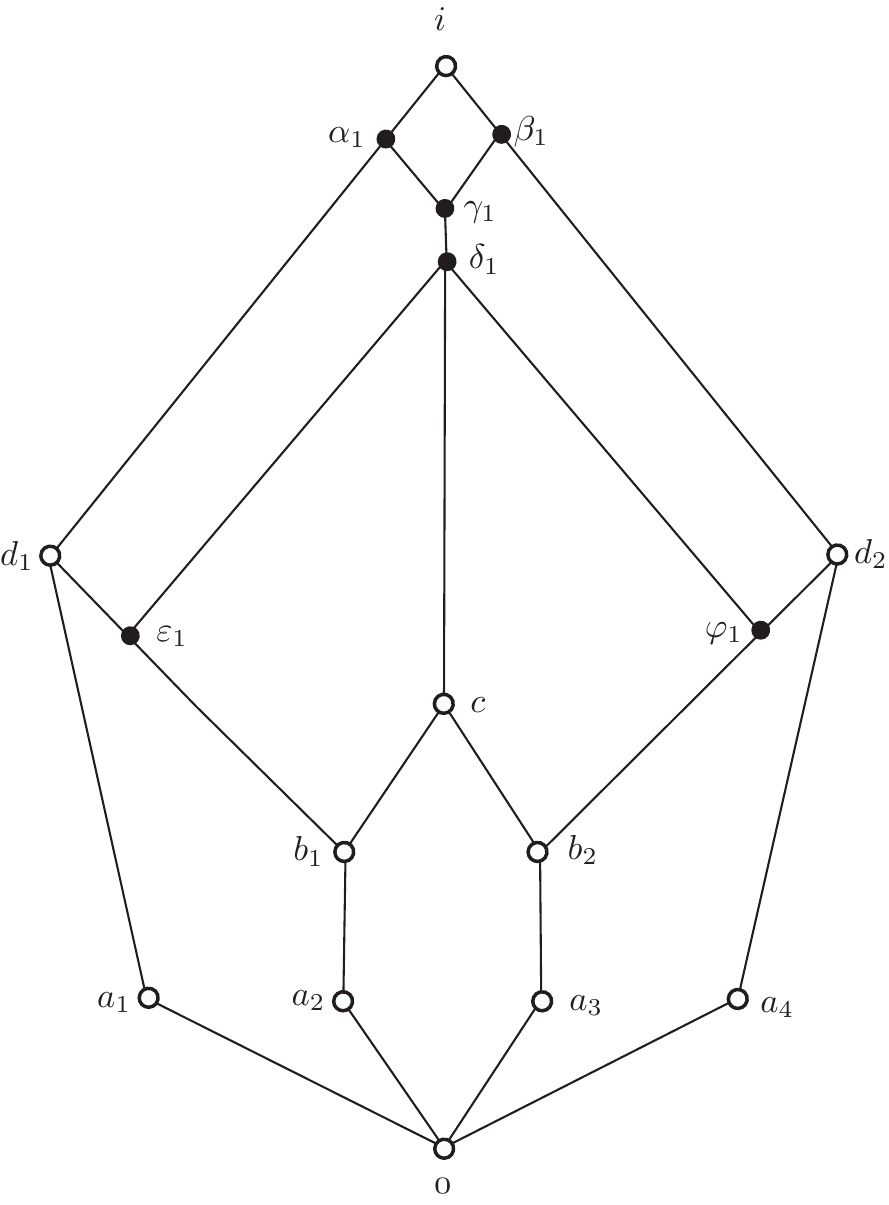}}
\caption{The lattice $\SK 2$}\label{F:4genstep2}

\bigskip

\bigskip

\centerline{\includegraphics[scale = .7]{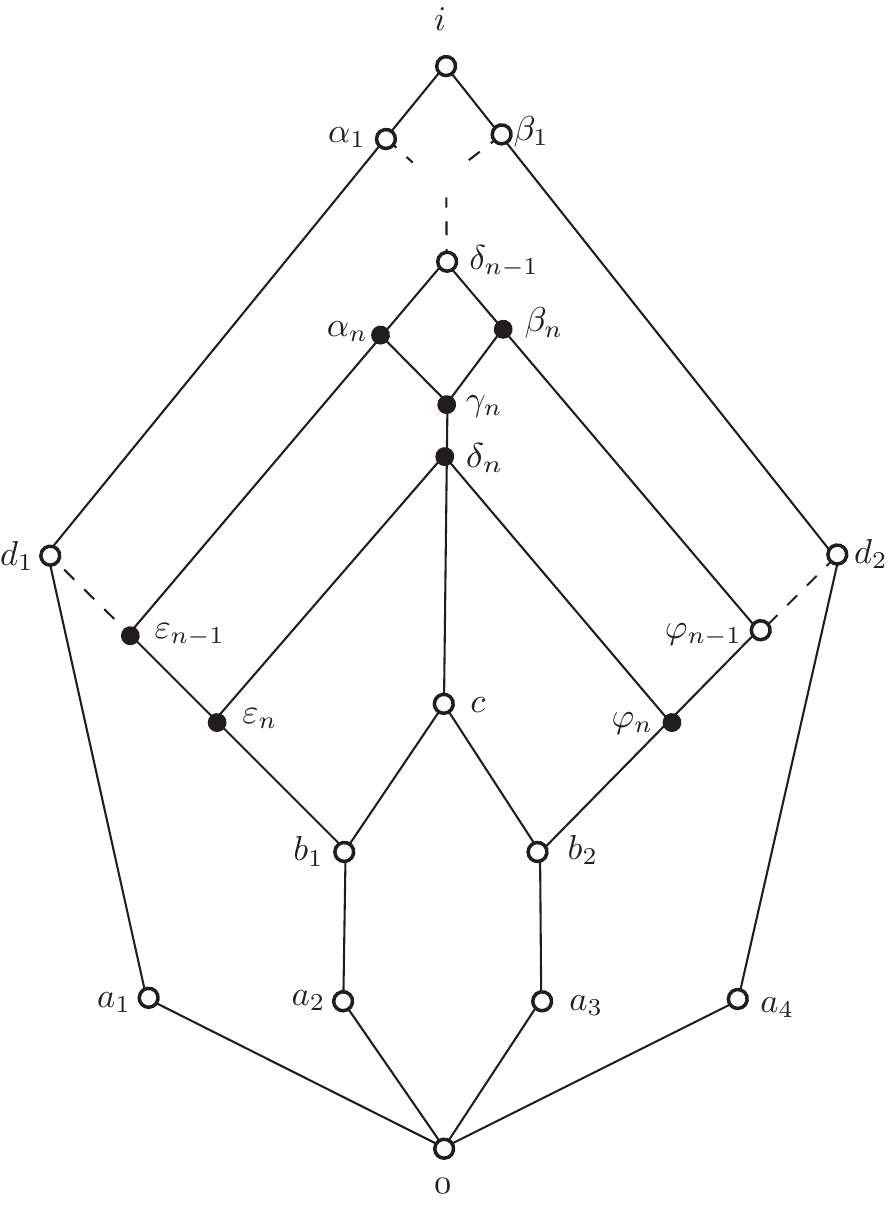}}
\caption{The lattice $\SK n$}\label{F:4geninduct}
\end{figure}

To prove Theorem~\ref{T:size}, we have to construct
arbitrarily large AGP lattices with $4$~atoms.

We start with the lattice $\SK 1$, the ten-element lattice of
Figure~\ref{F:4genstep1}. It is planar.
It~is generated by its atoms; 
in fact, all but two of its elements are joins of atoms.
The exceptions are $b_1$ and $b_2$. 
However, $b_1 = (a_1 \jj a_2) \mm (a_2 \jj a_3)$
and symmetrically for $b_2$.

We construct the lattice $\SK n$ inductively. 
To illustrate the inductive step, 
we first construct the lattice $\SK 2$  
of Figure~\ref{F:4genstep2}.

We obtain $\SK 2$ from $\SK 1$ by adding six elements,
$\ga_1, \gb_1, \gg_1, \gd_1, \ge_1, \gf_1$;
see the black filled elements of Figure~\ref{F:4genstep2}.
Observe that $\SK 1$ is 
a meet-subsemilattice of $\SK 2$ 
and it is almost a join-subsemilattice 
(hence a sublattice) of $\SK 2$. 
The exceptions are $a_1 \jj c$ and $d_1 \jj c$
(they both equal $i$ in $\SK 1$ and they both equal
$\ga_1$ in $\SK 2$) and symmetrically.

For the new elements, we have the following order relations:
\begin{align}\label{E:11}
b_1 &<\ge_1 < d_1, & b_2 &<\gf_1 < d_2, &d_1 &<\ga_1 < i,\\
d_2 &<\gb_1 < i, & \gd_1 &< \gf_1,\notag
\intertext{and the following joins and meets:}
\gg_1 &= \ga_1 \mm \gb_1, & \ga_1 &= d_1 \jj c,
&\gb_1 &= d_2 \jj c, &\gg_1 &= \ga_1 \mm \gb_1\label{E:12}\\
\ge_1 &= d_1 \mm \gg_1 & \gf_1 &= \gg_1 \mm d_1
& 1 &= \ge_1 \mm \gf_1.\notag
\end{align} 

Since $\SK 2$ is planar ordered set with bounds,
it is a lattice.

To see that $\SK 2$ is generated by its atoms, 
first observe that the $4$ changed joins in~$\SK 1$
participate only in verifying 
that $i$ is generated by the atoms,
but, of course, we have $i = a_1 \jj a_2 \jj a_3 \jj a_4$.
So all elements of $\SK 2$ are
generated by the atoms in~$\SK 2$. 

Now the induction step is very similar. 
We start with the lattice $\SK {n-1}$---as 
illustrated by Figure~\ref{F:4geninduct}---and 
extend it to the lattice $\SK n$
by adding the six elements 
$\ga_{n-1}, \gb_{n-1}, \gg_{n-1}, 
\gd_{n-1}, \ge_{n-1}, \gf_{n-1}$,
black filled in Figure~\ref{F:4geninduct}.
With obvious changes in \eqref{E:11} and \eqref{E:12},
we describe $\SK n$ and verify its properties.

\end{document}